\numberwithin{equation}{section}
\newtheorem{theorem}{Theorem}[section]
\newtheorem{lemma}[theorem]{Lemma}
\newtheorem{proposition}[theorem]{Proposition}
\newtheorem{corollary}[theorem]{Corollary}
\theoremstyle{definition}
\newtheorem{remark}[theorem]{Remark}
\newcommand{\SA}{{\mathcal{S}}_{\!A}}
\renewcommand{\Re}{\operatorname{Re}}
\renewcommand{\Im}{\operatorname{Im}}
\newcommand{\ssum}{\sideset{}{^*}\sum}
\newcommand{\Lf}{\mathcal{L}}
\begin{document}

\title[Value distribution for the derivatives of the logarithm of $L$-functions]{Value distribution for the derivatives of the logarithm of $L$-functions from the Selberg class in the half-plane of absolute convergence}

\author[T.~Nakamura]{Takashi Nakamura}
\address[T.~Nakamura]{Department of Liberal Arts, Faculty of Science and Technology, Tokyo University of Science, 2641 Yamazaki, Noda-shi, Chiba-ken, 278-8510, Japan}
\email{nakamuratakashi@rs.tus.ac.jp}
\urladdr{https://sites.google.com/site/takashinakamurazeta/}

\author[\L.~Pa\'nkowski]{{\L}ukasz Pa\'nkowski}
\address[\L.~Pa\'nkowski]{Faculty of Mathematics and Computer Science, Adam Mickiewicz University, Umultowska 87, 61-614 Pozna\'{n}, Poland, and Graduate School of Mathematics, Nagoya University, Nagoya, 464-8602, Japan}
\email{lpan@amu.edu.pl}

\subjclass[2010]{Primary 11M06, 11M26}
\keywords{Derivatives of the logarithm of $L$-functions, Selberg class, value-distribution, zeros}
\maketitle

\begin{abstract}
In the present paper, we show that, for every $\delta>0$, the function $(\log {\mathcal{L}}(s))^{(m)}$, where $m\in {\mathbb{N}} \cup \{ 0\}$ and ${\mathcal{L}} (s) := \sum_{n=1}^\infty a(n) n^{-s}$ is an element of the Selberg class ${\mathcal{S}}$, takes any value infinitely often in the strip $1<\Re(s) <1+\delta$, provided $\sum_{p\leq x} |a (p)|^2 \sim \kappa\pi(x)$ for some $\kappa>0$. In particular, ${\mathcal{L}} (s)$ takes any non-zero value infinitely often in the strip $1<\Re(s)<1+\delta$, and the first derivative of ${\mathcal{L}} (s)$ has infinitely many zeros in the half-plane $\Re(s)>1$. 
\end{abstract}

\section{Introduction and statement of main results}
Let ${\mathcal{S}}_{\!A}$ consist of functions defined, for $\sigma:=\Re(s)>1$, by
\begin{equation}\label{eq:0}
{\mathcal{L}} (s) = \sum_{n=1}^\infty \frac{a(n)}{n^{s}} = 
\prod_p \exp \Biggl( \sum_{k=1}^\infty \frac{b(p^k)}{p^{ks}} \Biggr) ,
\end{equation}
where $a (n) \ll n^\varepsilon$ for any $\varepsilon >0$ and $b(p^k) \ll p^{k\theta}$ for some $\theta <1/2$. Then it is well known that both the Dirichlet series and the Euler product converge absolutely when $\sigma >1$, and $a(p)=b(p)$ for every prime $p$ (e.g. \cite[p. 112]{Steu}). Moreover, the set ${\mathcal{S}}_{\!A}$ includes the Selberg class ${\mathcal{S}}$ (for the definition we refer to \cite{Kacz} or \cite[Section 6]{Steu}), which contains a lot of $L$-functions in number theory. As mentioned in \cite[Section 2.1]{Kacz}, the Riemann zeta function $\zeta (s)$, Dirichlet $L$-functions $L(s+i\theta,\chi)$ with $\theta \in {\mathbb{R}}$ and a primitive character $\chi$, $L$-functions associated with holomorphic newforms of a congruence subgroup of ${\rm{SL}}_2 ({\mathbb{Z}})$ (after some normalization) are elements of the Selberg class. However, it should be noted that ${\mathcal{S}} \subsetneq {\mathcal{S}}_{\!A}$, since for example $\zeta (s)/\zeta (2s) \in {\mathcal{S}}_{\!A}$ but $\zeta (s)/\zeta (2s) \not \in {\mathcal{S}}$ by the fact that $\zeta (s)/\zeta (2s)$ has poles on the line $\Re (s) =1/4$. 

Many mathematicians have been studying the distribution of the logarithmic derivative of the Riemann zeta function (see eg. \cite{Go}). For instance it is known that there are some relationships between mean value of products of logarithmic derivatives of $\zeta (s)$ near the critical line, correlations of the zeros of $\zeta (s)$ and the distribution of integers representable as a product of a fixed number of prime powers (see \cite{Fa} and \cite{Go}). Moreover, it is known that the second derivative of the logarithm of the Riemann zeta function appears in the pair correlation for the zeros of $\zeta (s)$ (see for example \cite{BK}). We refer also to \cite{Sto}, where Stopple investigated zeros of $(\log \zeta (s))''$. 

In the present paper, we show the following result on value distribution of the $m$-th derivative of the logarithm of $L$-function from~$\SA$.
\begin{theorem}\label{th:dn1}
Let $m \in {\mathbb{N}} \cup \{ 0\}$, $z \in {\mathbb{C}}$ and ${\mathcal{L}} (s) := \sum_{n=1}^\infty a(n) n^{-s} \in {\mathcal{S}}_{\!A}$ satisfy 
\begin{equation}\label{eq:Selberg}
\lim_{x\to\infty}\frac{1}{\pi (x)}\sum_{p\leq x} |a(p)|^2 = \kappa
\end{equation}
for some $\kappa>0$. Then, for any $\delta>0$, we have
\begin{equation}\label{eq:1}
\# \bigl\{s: 1< \Re (s) < 1+\delta, \,\,\, \Im(s) \in [0,T] \,\,\, \mbox{and} \,\,\, 
(\log {\mathcal{L}} (s))^{(m)} = z \bigr\} \gg T
\end{equation}
for sufficiently large $T$. 
\end{theorem}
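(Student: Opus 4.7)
The plan is to follow Bohr's classical equidistribution approach to value distribution in the half-plane of absolute convergence, combined with a Rouch\'e-type argument that converts approximate solutions into exact ones. I would begin by decomposing
\[
(\log \mathcal{L}(s))^{(m)} = D(s) + R(s), \qquad D(s) := (-1)^m \sum_p \frac{(\log p)^m a(p)}{p^s},
\]
where $R(s)$ collects the contribution of the higher prime powers ($k \geq 2$) in $\log \mathcal{L}$. Because $b(p^k) \ll p^{k\theta}$ with $\theta < 1/2$, the Dirichlet series defining $R$ converges absolutely and represents a bounded holomorphic function in every half-plane $\Re(s) \geq \theta + 1/2 + \eta_0$ with $\eta_0 > 0$; call its supremum there $M_R$. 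Next, fix a small $\sigma_0 \in (0, \delta)$ (to be chosen depending on $z$), truncate $D$ to the Dirichlet polynomial $D_P(t) := (-1)^m \sum_{p \leq P} (\log p)^m a(p) p^{-(1+\sigma_0+it)}$, and invoke the Montgomery--Vaughan mean-value bound to see that $\int_0^T |D(1+\sigma_0+it) - D_P(t)|^2\, dt = o(T)$ as $P \to \infty$, using (\ref{eq:Selberg}) and the prime number theorem. By Kronecker's theorem, for any target phases $(\omega_p^\star)_{p \leq P} \in \mathbb{T}^{\pi(P)}$ and any $\eta > 0$, the set $\{t \in [0,T] : \max_{p \leq P} |p^{-it} - \omega_p^\star| < \eta\}$ has measure at least $c_{\eta, P} T$ for $T$ large, where $c_{\eta, P} > 0$.

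The crucial step, and the main obstacle, is a support computation: for $\sigma_0$ small enough and $P$ large enough, every $w \in \mathbb{C}$ with $|w| \leq |z| + M_R + 1$ must lie in the interior of
\[
\Sigma_{\sigma_0, P} := \Bigl\{(-1)^m \sum_{p \leq P} \frac{(\log p)^m a(p)}{p^{1+\sigma_0}} \omega_p : \omega_p \in \mathbb{T}\Bigr\}.
\]
Since $\Sigma_{\sigma_0, P}$ is the Minkowski sum of circles of radii $|c_p| := (\log p)^m |a(p)|/p^{1+\sigma_0}$ centred at the origin, and asymptotically no single circle dominates the rest, $\Sigma_{\sigma_0, P}$ coincides with the closed disk of radius $\rho_{\sigma_0, P} := \sum_{p \leq P} |c_p|$. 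To establish that $\rho_{\sigma_0, P} \to \infty$ as $\sigma_0 \to 0^+$ and $P \to \infty$, I would invoke the elementary inequality $|a(p)|^2 \leq |a(p)| \cdot \max_{q \leq x} |a(q)|$ together with the bound $|a(p)| \ll_\varepsilon p^\varepsilon$ to deduce $\sum_{p \leq x} |a(p)| \gg_\varepsilon \pi(x) / x^\varepsilon$ from (\ref{eq:Selberg}); partial summation then yields $\rho_{\sigma_0, P} \gg \log(1/\sigma_0)$ when $m = 0$ and a similar divergent bound of the order of a positive power of $1/\sigma_0$ when $m \geq 1$, with constants depending on $\varepsilon$. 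Choosing $\sigma_0$ sufficiently small then secures the support property.

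Finally, for each $t_0$ in the Kronecker-good subset of $[0,T]$ I would choose the target phases $(\omega_p^\star)$ (depending on $t_0$) so that the element of $\Sigma_{\sigma_0, P}$ associated to $(\omega_p^\star)$ equals $z - R(1+\sigma_0+it_0)$; partitioning $[0, T]$ into short intervals on which $R(1+\sigma_0+it)$ is nearly constant and applying the Kronecker step on each, one obtains $\gg T$ well-separated values of $t$ for which $|(\log \mathcal{L}(1+\sigma_0+it))^{(m)} - z| < \varepsilon$ for any preassigned $\varepsilon > 0$. A standard Rouch\'e (or minimum-modulus) argument on small disks centred at these points -- with the requisite lower bound for $|(\log \mathcal{L}(s))^{(m)} - z|$ on the boundary furnished by a quantitative mean-value estimate on small circles -- upgrades these approximate zeros to exact ones, yielding the claimed $\gg T$ solutions. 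The principal technical obstacle is the support estimate: since (\ref{eq:Selberg}) controls only the $L^2$ average of $|a(p)|$, producing a divergent $L^1$-type sum $\rho_{\sigma_0, P}$ requires the additional use of $|a(p)| \ll_\varepsilon p^\varepsilon$ together with a balanced choice of $\varepsilon$ against $\sigma_0$.
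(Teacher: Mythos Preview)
Your overall architecture---Kronecker approximation plus Rouch\'e, together with a support computation showing that the twisted Dirichlet polynomial covers a large disk---is the right one and matches the paper's. Your support argument (that $\Sigma_{\sigma_0,P}$ is the full disk of radius $\rho_{\sigma_0,P}$, and that $\rho_{\sigma_0,P}\to\infty$ as $\sigma_0\to 0^+$ by combining the $L^2$ hypothesis with $|a(p)|\ll p^\varepsilon$) is correct and corresponds to the paper's Lemmas~\ref{lem:annulas} and~\ref{lem:SN}.

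The genuine gap is in how you handle the higher-prime-power remainder $R(s)$. You propose to choose the target phases $(\omega_p^\star)$ so that the twisted polynomial $D_P^{\omega^\star}$ hits $z-R(1+\sigma_0+it_0)$; but the Kronecker-good set of $t_0$'s is defined relative to a \emph{fixed} target $(\omega_p^\star)$, so the target cannot depend on $t_0$. Your repair---partition $[0,T]$ into short intervals on which $R$ is nearly constant and run Kronecker on each---does not close this: Weyl equidistribution only delivers the density $c_{\eta,P}$ on intervals whose length exceeds some threshold $L_0(P,\eta)$, whereas ``$R$ nearly constant to within $\varepsilon$'' forces interval length $\ll \varepsilon/\sup|R'|$, and these two constraints conflict once $P$ is large and $\varepsilon$ small. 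The Rouch\'e step inherits the same defect: knowing only that $|(\log\mathcal L)^{(m)}(s_0)-z|<\varepsilon$ at an isolated point gives you no comparison function vanishing inside the disk, and a mean-value bound on small circles supplies upper, not lower, bounds for $|(\log\mathcal L)^{(m)}-z|$ on the boundary.

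The paper sidesteps all of this by not separating the $k=1$ and $k\ge 2$ terms. Its Lemma~\ref{lem:Cassels} (an adaptation of Cassels' inductive construction) produces a fixed $\sigma\in(1,1+\delta)$ and fixed unimodular twists $\chi(p)$ such that the \emph{full} twisted series
\[
F(s)=\sum_p\sum_{k\ge 1}\frac{\chi(p)^k b(p^k)(k\log p)^m}{p^{ks}}
\]
satisfies $F(\sigma)=z$ exactly; the induction absorbs the higher-prime-power contributions at each stage because on $(N_j,N_{j+1}]$ there are $\gg N_j^{1-\varepsilon}$ prime terms against only $O(N_j^{1/2})$ prime-power terms. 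With this fixed $F$ in hand, Kronecker is applied once with the fixed target $(\chi(p))_{p\le M}$, and Rouch\'e compares $(\log\mathcal L)^{(m)}(s+i\tau)-z$ against $F(s)-z$, which genuinely has a zero at $s=\sigma$. If you wish to rescue your decomposition, the correct move is to choose $(\omega_p^\star)$ so that the truncated \emph{total} twisted sum over $p\le P$ and all $k\ge 1$ equals $z$ (the $p>P$ tail of $R$ being $o_P(1)$ by absolute convergence); but making this precise amounts to reproving Lemma~\ref{lem:Cassels}.
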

\begin{remark}
The condition \eqref{eq:Selberg} is closely related to the widely believed Selberg conjecture
\begin{equation}\label{eq:SelbergOrg}
\sum_{p\leq x}\frac{|a(p)|^2}{p} = \kappa\log\log x + O(1),\qquad(\kappa>0).
\end{equation}
However, \eqref{eq:SelbergOrg} is weaker than \eqref{eq:Selberg}, since in order to deduce \eqref{eq:Selberg} we need to assume that the error term in \eqref{eq:SelbergOrg} is $C_1+C_2/\log x + O\left((\log x)^{-2}\right)$ for $C_1,C_2\geq 0$.
\end{remark}
\begin{remark}
It is known (see for example \cite[Theorem 3.6 (vi)]{Pe}) that the assumption \eqref{eq:Selberg} implies that the abscissa of absolute convergence of $\Lf(s)\not\equiv 1$ is equal to $1$, which is also a necessary condition for \eqref{eq:1}. The main reason, why the assumption that the abscissa of absolute convergence is $1$ is not enough in our case, is the fact that we need to estimate the number of primes $p$ for which $a(p)$ is not too close to $0$. Thus, if $|a(p)|>c$ for every prime $p$ and some constant $c>0$, then \eqref{eq:1} is equivalent to the fact that the abscissa of absolute convergence is $1$.
\end{remark}

As an immediate consequence of Theorem \ref{th:dn1}, we obtain the following result.
\begin{corollary}\label{pro:1}
Let $z \in {\mathbb{C}} \setminus \{ 0 \}$ and ${\mathcal{L}} (s) \in {\mathcal{S}}_{\!A}$ satisfy \eqref{eq:Selberg}. Then, for any $\delta>0$, we have
\begin{equation}\label{eq:Bohr}
\# \bigl\{s: 1< \Re (s) < 1+\delta, \,\,\, \Im(s) \in [0,T] \,\,\, \mbox{and} \,\,\, {\mathcal{L}} (s) =z \bigr\} \gg T
\end{equation}
for sufficiently large $T$. 
\end{corollary}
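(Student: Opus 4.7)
The plan is to deduce Corollary~\ref{pro:1} as an essentially immediate consequence of Theorem~\ref{th:dn1} in the case $m=0$, using the hypothesis $z\neq 0$ to pass from the additive equation $\log\mathcal{L}(s)=w$ to the multiplicative equation $\mathcal{L}(s)=z$.

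First, since $z\in\mathbb{C}\setminus\{0\}$, one can fix any $w\in\mathbb{C}$ with $e^w=z$ (for definiteness, the principal branch). I then observe that on the half-plane $\Re(s)>1$ the Euler product in \eqref{eq:0}, together with the bound $b(p^k)\ll p^{k\theta}$ with $\theta<1/2$, ensures that
$$\log\mathcal{L}(s)=\sum_p\sum_{k=1}^\infty\frac{b(p^k)}{p^{ks}}$$
converges absolutely and thus defines a single-valued holomorphic branch of the logarithm of $\mathcal{L}(s)$ in that half-plane. In particular the equation $\log\mathcal{L}(s)=w$ is unambiguous on $\Re(s)>1$.

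Applying Theorem~\ref{th:dn1} with $m=0$ and target value $w$ now yields
$$\#\bigl\{s:1<\Re(s)<1+\delta,\ \Im(s)\in[0,T],\ \log\mathcal{L}(s)=w\bigr\}\gg T,$$
and every such $s$ automatically satisfies $\mathcal{L}(s)=\exp(\log\mathcal{L}(s))=e^w=z$. Hence this count is a lower bound for the cardinality in \eqref{eq:Bohr}, and the corollary follows.

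There is no genuine obstacle in this reduction; all the analytic content lives in Theorem~\ref{th:dn1}. The only subtlety worth mentioning is that the argument only captures those $s$ lying on one specific branch of $\log z$, so it certainly undercounts the full solution set of $\mathcal{L}(s)=z$. This loss is irrelevant for a lower bound of order~$T$.
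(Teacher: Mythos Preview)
Your proof is correct and matches the paper's approach: the paper simply states that Corollary~\ref{pro:1} is an immediate consequence of Theorem~\ref{th:dn1}, and your argument spells out exactly this reduction via $m=0$ and a choice of logarithm of $z$.
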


The truth of \eqref{eq:Bohr} with ${\mathcal{L}} (s)= \zeta (s)$ was already proved by Bohr in \cite{Bo} (see also Remark \ref{rem:convex}). Moreover, it was conjectured in \cite[p.~188, l.~12--13]{Steu} that it holds even for all $L$-functions with the following polynomial Euler product
$$
L (s) = \sum_{n=1}^\infty \frac{a(n)}{n^{s}} = 
\prod_p \prod_{j=1}^m \biggl( 1 - \frac{\alpha_j(p)}{p^{s}} \biggr)^{-1} ,\qquad  \sigma >1,
$$
where the $\alpha_j (p)$'s are complex numbers with $|\alpha_j (p)| \le 1$ (see \cite[Section 2.2]{Steu}). One can easily show (see \cite[Lemma 2.2]{Steu}) that the coefficients $a(n)$ satisfy $a (n) \ll n^\varepsilon$ for every $\varepsilon >0$, and hence ${\mathcal{S}}_{\!A}$ contains all functions of such kind. Therefore, Corollary \ref{pro:1} shows that we have (\ref{eq:Bohr}) not only for $L(s)$ as above, but also for ${\mathcal{L}} (s) \in {\mathcal{S}}_{\!A}$. 

The above result on $c$-values is related to the  following uniqueness theorem proved by Li \cite[Theorem 1]{Li}. If two $L$-functions $\mathfrak{L}_1$ and $\mathfrak{L}_2$ (Euler product is not necessary) satisfy the same functional equation, $a_1(1)=1=a_2(1)$, and $\mathfrak{L}_1 ^{-1} (c_j) = \mathfrak{L}_2^{-1} (c_j)$ for two distinct complex numbers $c_1$ and $c_2$, then $\mathfrak{L}_1 = \mathfrak{L}_2$. It turns out (see Ki \cite[Theorem 1]{Ki}) that in the case of two functions $\mathfrak{L}_1$ and $\mathfrak{L}_2$ from the extended Selberg class $\mathcal{S}^\#$ (for definition we refer to \cite[p,~160]{Kacz} or \cite[p.~217]{Steu}), in order to prove that $\mathfrak{L}_1 = \mathfrak{L}_2$, it is sufficient to check that they have the same functional equation with positive degree, $a_1(1)=1=a_2(1)$ and $\mathfrak{L}_1 ^{-1} (c) = \mathfrak{L}_2^{-1} (c)$ for some nonzero complex number $c$. Recently, Gonek, Haan and Ki \cite{GHK} improved Ki's result by showing that the assumption that they satisfy the same functional equation is superfluous. 
Now let ${\mathcal{L}} (s) \in {\mathcal{S}}_{\!A}$ satisfy all assumptions of Corollary \ref{pro:1}. Then we see that for any $c \in {\mathbb{C}} \setminus \{0\}$ and sufficiently large $T$, we have
$$
\# {\mathcal{L}}^{-1} (c) \ge 
\# \bigl\{ s \in {\mathbb{C}} : {\mathcal{L}} (s) = c, \,\,\, \Re (s) >1, \,\,\, \Im(s) \in [0,T]\bigr\} \gg T,
$$
which means that it is not trivial to check the condition $\mathfrak{L}_1 ^{-1} (c) = \mathfrak{L}_2^{-1} (c)$ if $\mathfrak{L}_1,\mathfrak{L}_2\in \mathcal{S}_{\!A}$.

Next, since $\Lf(s)$ has no zeros in the half-plane of absolute convergence and $(\log\Lf(s))' = \Lf'(s)/\Lf(s)$, we obtain immediately the following result by using Theorem \ref{th:dn1} for $m=1$ and $z=0$.
\begin{corollary}\label{pro:d1}
Let ${\mathcal{L}} (s) \in {\mathcal{S}}_{\!A}$ satisfy \eqref{eq:Selberg}. Then for any $\delta>0$, one has
\begin{equation}\label{eq:d1}
\# \bigl\{s: 1< \Re (s) < 1+\delta, \,\,\, \Im(s) \in [0,T] \,\,\, \mbox{and} \,\,\, {\mathcal{L}}' (s) =0 \bigr\} \gg T
\end{equation}
for sufficiently large $T$. 
\end{corollary}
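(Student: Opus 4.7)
The plan is extremely short: the statement is designed to follow immediately from Theorem \ref{th:dn1} applied with $m=1$ and $z=0$. All that is needed is the translation between the zeros of $\mathcal{L}'(s)$ and the zeros of $(\log \mathcal{L}(s))'$ in the half-plane of absolute convergence.

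First I would observe that, by the Euler product representation in \eqref{eq:0}, the function $\mathcal{L}(s)$ is a product of exponentials for $\sigma>1$, and the series $\sum_{k\geq 1} b(p^k) p^{-ks}$ converges absolutely there (since $b(p^k)\ll p^{k\theta}$ with $\theta<1/2$). Hence $\mathcal{L}(s)\neq 0$ on $\Re(s)>1$, and $\log \mathcal{L}(s)$ is a well-defined holomorphic branch in this half-plane. Differentiating termwise yields the standard identity
\begin{equation*}
(\log \mathcal{L}(s))' = \frac{\mathcal{L}'(s)}{\mathcal{L}(s)},\qquad \Re(s)>1.
\end{equation*}
Since the denominator never vanishes on $\Re(s)>1$, the equation $(\log \mathcal{L}(s))' = 0$ is equivalent, point by point, to $\mathcal{L}'(s)=0$ in the strip $1<\Re(s)<1+\delta$.

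Applying Theorem \ref{th:dn1} with $m=1$ and $z=0$ then provides $\gg T$ points $s$ in this strip with $\Im(s)\in[0,T]$ satisfying $(\log \mathcal{L}(s))' = 0$, and by the equivalence above each such point is a zero of $\mathcal{L}'(s)$. This yields \eqref{eq:d1}.

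There is essentially no obstacle: the entire content lives in Theorem \ref{th:dn1}, and the only substantive step in the corollary itself is the non-vanishing of $\mathcal{L}(s)$ for $\Re(s)>1$, which is immediate from the Euler product. The argument does not require any additional input.
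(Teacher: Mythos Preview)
Your proof is correct and takes exactly the same approach as the paper: the authors also deduce the corollary immediately from Theorem \ref{th:dn1} with $m=1$ and $z=0$, using that $\mathcal{L}(s)\neq 0$ for $\sigma>1$ and $(\log\mathcal{L}(s))'=\mathcal{L}'(s)/\mathcal{L}(s)$.
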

The above corollary generalizes the well-known result that the first derivative of the Riemann zeta function has infinitely many zeros in the region of absolute convergence $\sigma >1$ (see \cite[Theorem 11.5 (B)]{Tit}). Moreover, it should be mentioned that although there are a lot of papers on zeros of the derivatives of the Riemann zeta function (see for instance \cite{Ber}, \cite{LM}, \cite{Spei} and articles which cite them), there are few papers treating zeros of the derivatives of other zeta and $L$-functions. On result concerning this matter  is the following fact due to Yildirim \cite[Theorem 2]{Yi}. Let $\chi$ be a Dirichlet character to the modulus $q$ and $m$ be the smallest prime that does not divide $q$. Then the $k$-th derivatives of the Dirichlet $L$-function $L^{(k)} (s,\chi)$ does not vanish in the half-plane
$$
\sigma > 1 + \frac{m}{2} \Biggl( 1+\sqrt{1+\frac{4k^2}{m\log m}} \Biggr), \qquad k \in {\mathbb{N}}. 
$$

As an application of our method we show the following result concerning zeros of combinations of $L$-functions.
\begin{corollary}\label{th:1}
Let $c_1, c_2 \in {\mathbb{C}} \setminus \{ 0 \}$ and ${\mathcal{L}}_j (s) := \sum_{n=1}^\infty a_j(n) n^{-s} \in {\mathcal{S}}_{\!A}$ for $j=1,2$. Assume
\begin{equation}\label{eq:SelbTwo}
\lim_{x\to\infty}\frac{1}{\pi(x)}\sum_{p\leq x} |a_1 (p) - a_2 (p)|^2 = \kappa,\qquad (\kappa>0).
\end{equation}
Then for any $\delta>0$, it holds that
$$
\# \bigl\{s: 1 < \Re (s) < 1+\delta, \,\,\, \Im(s) \in [0,T] \,\,\, \mbox{and} \,\,\,
c_1 {\mathcal{L}}_1 (s) + c_2 {\mathcal{L}}_2 (s) =0 \bigr\} \gg T
$$
for sufficiently large $T$. 
\end{corollary}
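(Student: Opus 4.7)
The plan is to reduce Corollary~\ref{th:1} to Theorem~\ref{th:dn1} applied to the quotient $\mathcal{L}(s):=\mathcal{L}_1(s)/\mathcal{L}_2(s)$. First, since $\mathcal{L}_2\in\SA$ has a convergent Euler product for $\sigma>1$, it has no zeros in that half-plane. Hence, in the strip $1<\Re(s)<1+\delta$, the equation $c_1\mathcal{L}_1(s)+c_2\mathcal{L}_2(s)=0$ is equivalent to $\mathcal{L}_1(s)/\mathcal{L}_2(s) = -c_2/c_1$. Since $-c_2/c_1\ne 0$, we may fix any $z_0\in\mathbb{C}$ with $e^{z_0}=-c_2/c_1$, and it suffices to find $\gg T$ points $s$ in the prescribed rectangle satisfying $\log\mathcal{L}(s)=z_0$ (for an arbitrarily chosen branch of $\log$ that agrees with the Dirichlet series defining $\log\mathcal{L}$ on $\sigma>1$).

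Next, using the Euler products of $\mathcal{L}_1$ and $\mathcal{L}_2$, for $\sigma>1$ we have
$$
\mathcal{L}(s)=\prod_{p}\exp\Bigl(\sum_{k=1}^{\infty}\frac{b(p^k)}{p^{ks}}\Bigr),\qquad b(p^k):=b_1(p^k)-b_2(p^k),
$$
so the Euler-product coefficients of $\mathcal{L}$ satisfy $b(p^k)\ll p^{k\theta}$ with the same $\theta<1/2$ inherited from $\mathcal{L}_1,\mathcal{L}_2$. Using the identity $a_j(p)=b_j(p)$ recalled in \eqref{eq:0} for elements of $\SA$, we get $b(p)=a_1(p)-a_2(p)$, so the hypothesis \eqref{eq:SelbTwo} becomes
$$
\lim_{x\to\infty}\frac{1}{\pi(x)}\sum_{p\le x}|b(p)|^{2}=\kappa>0,
$$
which is precisely the Selberg-type condition \eqref{eq:Selberg} for $\mathcal{L}$. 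A routine check based on Dirichlet convolution shows that the coefficients $a(n)$ of $\mathcal{L}=\mathcal{L}_1\cdot(1/\mathcal{L}_2)$ satisfy $a(n)\ll n^{\varepsilon}$, because $1/\mathcal{L}_2$ has a convergent Euler product with local exponents $-b_2(p^k)$ obeying the same bounds as those of $\mathcal{L}_2$. Consequently $\mathcal{L}\in\SA$.

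Finally, applying Theorem~\ref{th:dn1} to $\mathcal{L}$ with $m=0$ and $z=z_0$ produces $\gg T$ points $s$ with $1<\Re(s)<1+\delta$ and $\Im(s)\in[0,T]$ at which $\log\mathcal{L}(s)=z_0$; each such point satisfies $\mathcal{L}(s)=e^{z_0}=-c_2/c_1$, and therefore $c_1\mathcal{L}_1(s)+c_2\mathcal{L}_2(s)=0$, which gives the claim.

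The main potential obstacle is the coefficient bound $a(n)\ll n^{\varepsilon}$ for the quotient $\mathcal{L}_1/\mathcal{L}_2$ required to place it formally in $\SA$; if this verification turns out to be delicate (the inverse Dirichlet series involves polynomial combinations of the $b_2(p^k)$'s, and the partition-type combinatorics only give $a(n)\ll n^{\theta+\varepsilon}$ at first sight), one can bypass it by noting that the proof of Theorem~\ref{th:dn1} only uses the Euler-product representation with the bound on $b(p^k)$ together with the Selberg-type condition on $|b(p)|^{2}$, both of which are manifest for $\mathcal{L}_1/\mathcal{L}_2$. Everything else in the proof of Theorem~\ref{th:dn1} then applies verbatim.
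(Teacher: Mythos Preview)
Your argument is correct and follows essentially the same route as the paper: reduce $c_1\mathcal{L}_1(s)+c_2\mathcal{L}_2(s)=0$ to $\log(\mathcal{L}_1/\mathcal{L}_2)(s)=\log(-c_2/c_1)$ and then invoke the main machinery. The paper avoids the question of whether $\mathcal{L}_1/\mathcal{L}_2\in\SA$ (i.e., the $a(n)\ll n^{\varepsilon}$ verification you flag as potentially delicate) by applying Lemma~\ref{lem:4} directly to $L(s)=\log\mathcal{L}_1(s)-\log\mathcal{L}_2(s)$, whose hypotheses involve only the Euler-product data $b(p^k)=b_1(p^k)-b_2(p^k)$, the bound $b(p)\ll p^{\varepsilon}$, and the Selberg-type mean \eqref{eq:SelbTwo}; this is precisely the workaround you describe in your last paragraph, so your proof and the paper's coincide once that shortcut is taken.
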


Now we mention earlier works related to zeros of zeta functions in the half plane $\sigma >1$. Davenport and Heilbronn \cite{Daven} showed that the Hurwitz zeta function $\zeta (s, \alpha) = \sum_{n=0}^\infty (n+\alpha)^{-s}$ has infinitely many zeros in the region $\Re (s) >1$, provided $0< \alpha \ne 1/2,1$ is rational or transcendental. Later, Cassels \cite{Cassels} extended their result to algebraic irrational parameter $\alpha$. Recently, Saias and Weingartner \cite{SW} showed that  a Dirichlet series with periodic coefficients and non-vanishing in the half-plane $\sigma >1$ equals $F(s) = P(s) L(s,\chi)$, where $P(s)$ is a Dirichlet polynomial that does not vanish in $\sigma >1$. Afterwards, Booker and Thorne \cite{BT}, and very recently Righetti \cite{Reg} generalized the work of Saias and Weingartner to general $L$-functions with bounded coefficients at primes.

Nevertheless Corollary \ref{th:1} gives new examples, which cannot be treated by Saias-Weingartner approach and its known generalizations. For example Corollary \ref{th:1} implies that the Euler-Zagier double zeta function $\zeta_2 (s,s) = (\zeta^2(s)-\zeta (2s))/2$ has zeros for $\sigma >1$. Moreover, we can prove that the zeta functions associated to symmetric matrices treated by Ibukiyama and Saito in \cite[Theorem 1.2]{IbuSa1} vanish infinitely often in the region of absolute convergence. In addition, it follows that some Epstein zeta functions, for example, 
\begin{equation*}
\begin{split}
&\zeta (s; I_6) = -4 \bigl( \zeta (s) L(s-2,\chi_{-4}) - 4\zeta (s-2) L(s,\chi_{-4}) \bigr), \\
&\zeta (s; {\mathfrak{L}}_{24}) = \frac{65520}{691} \bigl( \zeta (s) \zeta (s-11) - L(s;\Delta) \bigr),
\end{split}
\end{equation*}
have infinitely many zeros for $\sigma >3$ and $\sigma >12$, respectively, since $|\tau(p)| < 2p^{11/2}$ and 
\begin{equation*}
\begin{split}
&\lim_{x\to\infty}\frac{1}{\pi(x)}\sum_{p\leq x} 
\bigl| \bigl(1+p^{-2}\bigr) \bigl(1- \chi_{-4} (p) \bigr) \bigr|^2 = 
\frac{0}{\varphi(4)} +\frac{2^2}{\varphi(4)} =2,\\
&\lim_{x\to\infty}\frac{1}{\pi(x)}\sum_{p\leq x} \bigl| p^{-11} +1 - \tau(p) p^{-11} \bigr|^2 = 1.
\end{split}
\end{equation*}
It should be noted that it is already known that $\zeta_2 (s,s)$ and $\zeta (s; {\mathfrak{L}}_{24})$ vanish in the half-plane $\sigma >1$ and $\sigma >12$ from the numerical computations \cite[Figure 1]{MaSh} and \cite[Fig.~1]{Pro}.  Furthermore, we have to remark that such zeta functions have infinitely many zeros outside of the region of absolute convergence (see \cite[Main Theorem 1]{NaPa1} and \cite[Theorem 3.1]{NaPa2}).  

In Sections 2, we prove Theorem \ref{th:dn1} and its corollaries. Some topics related to almost periodicity are discussed in Section 3. More precisely, we prove that for any $\Re (\eta) >0 $, the function $\zeta (s) \pm \zeta (s+\eta)$ has zeros when $\sigma >1$ (see Corollary \ref{cor:alzero}) but for any $\delta >0$, there exists $\theta \in {\mathbb{R}} \setminus \{ 0\}$ such that the function $\zeta (s) + \zeta (s+i\theta)$ does not vanish in the region $\sigma \ge 1+\delta$ (see Proposition \ref{pro:nozero1}). 

\section{Proofs of Theorem \ref{th:dn1} and its corollaries}

\begin{lemma}\label{lem:annulas}
Let $r_1,\ldots,r_n\in\mathbb{C}$ be such that $0<|r_1|\leq |r_2|\leq \cdots\leq |r_n|$ and $R_0 = 0$, $R_j = |r_1|+\cdots+|r_j|$. Then 
\[
\left\{\sum_{j=1}^n c_j r_j: |c_j| = 1,\ c_j\in\mathbb{C}\right\} = \{z\in\mathbb{C}: T_n\leq z\leq R_n\},
\]
where 
\[
T_n = \begin{cases}
|r_n|-R_{n-1}&\text{if $R_{n-1}\leq |r_n|$},\\
0&\text{otherwise}.
\end{cases}
\]
\end{lemma}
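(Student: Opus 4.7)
The plan is to induct on $n$, viewing the set $S_n := \{\sum_{j=1}^n c_j r_j : |c_j|=1\}$ as the Minkowski sum of the previous level with a circle.

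The base case $n=1$ is immediate: $S_1$ is the circle of radius $|r_1|$ around the origin, and indeed $T_1 = |r_1| = R_1$, so $S_1 = \{z \in \mathbb{C} : T_1 \le |z| \le R_1\}$.

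For the inductive step, the decomposition $z = (z - c_n r_n) + c_n r_n$ exhibits
\[
S_n = S_{n-1} + \bigl\{v \in \mathbb{C} : |v| = |r_n|\bigr\},
\]
so by the inductive hypothesis, $z \in S_n$ exactly when the circle $\{w : |z - w| = |r_n|\}$ meets the annulus $\{w : T_{n-1} \le |w| \le R_{n-1}\}$. The moduli of points on that circle fill out the interval $[\,\bigl||z| - |r_n|\bigr|,\, |z| + |r_n|\,]$, so by the standard overlap test the two sets intersect precisely when $\bigl||z| - |r_n|\bigr| \le R_{n-1}$ and $T_{n-1} \le |z| + |r_n|$. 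The first inequality rearranges to $|r_n| - R_{n-1} \le |z| \le R_n$, while the second is automatic because $T_{n-1} \le |r_{n-1}| \le |r_n|$ (trivially when $T_{n-1}=0$, and otherwise $T_{n-1} = |r_{n-1}| - R_{n-2} \le |r_{n-1}|$). Combining with $|z|\ge 0$ yields exactly $T_n \le |z| \le R_n$, closing the induction.

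I do not anticipate a serious obstacle here. The only delicate point, and the only place the monotonicity hypothesis $|r_{j}| \le |r_{j+1}|$ is genuinely used, is in the verification $T_{n-1} \le |r_n|$: without this inequality, the inner radius of $S_{n-1}$ could exceed $|r_n|$, so the circle of radius $|r_n|$ about a point $z$ with $|z|$ near $T_n$ would not reach $S_{n-1}$, and $S_n$ would fail to be the predicted annulus.
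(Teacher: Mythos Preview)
Your induction argument is correct. The Minkowski-sum identification $S_n = S_{n-1} + \{v:|v|=|r_n|\}$ together with the interval-overlap test works exactly as you describe, and your verification that $T_{n-1}\le |r_n|$ (hence that the second overlap condition is automatic) is the right place to invoke the ordering $|r_{n-1}|\le|r_n|$.

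The paper's proof is different and much shorter: it does not argue from scratch but simply quotes an external result, \cite[Proposition~3.3]{CG}, which already asserts that every $z$ with $T_n\le |z|\le R_n$ can be written as $\sum_{j=1}^n c'_j\,|r_j|$ with $|c'_j|=1$; the reduction to complex $r_j$ is then the one-line substitution $c_j = c'_j\,|r_j|/r_j$. In effect the paper outsources the content of the lemma, and strictly speaking only addresses the inclusion $\supseteq$ (the reverse inclusion being immediate from the triangle and reverse triangle inequalities). Your argument has the advantage of being self-contained and of establishing both inclusions at once, at the cost of a few more lines.
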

\begin{proof}
From \cite[Proposition 3.3]{CG} every complex number $z$ with $T_n\leq |z|\leq R_n$ can be written as 
\[
z = \sum_{j=1}^n c'_j |r_j|,\qquad |c'_j|=1.
\]
Hence, taking $c_j = c'_j |r_j|/r_j$ completes the proof.
\end{proof}

\begin{lemma}\label{lem:SN}
Let $b(p)$ be a sequence of complex numbers indexed by primes. Assume that $b(p)\ll p^{\varepsilon}$ for every $\varepsilon>0$ and   
\begin{equation*}\label{eq:PNTder}
\lim_{x\to\infty}\frac{1}{(\log x)^m\pi (x)}\sum_{p\leq x} |b(p)|^2 = \kappa
\end{equation*}
for some $\kappa>0$ and a non-negative integer $m$. Then for any $c>1$, $\eta>0$ and $\varepsilon>0$ we have
\[
\sum_{\substack{x<p\leq cx\\|b(p)|> p^{-\eta}}} 1 \gg x^{1-\varepsilon}.
\]
\end{lemma}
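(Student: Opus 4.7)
The plan is to extract the desired count from the second-moment hypothesis by a simple dichotomy on the size of $|b(p)|$, using the trivial bound $|b(p)|\ll p^{\varepsilon'}$ to convert a lower bound on the sum of $|b(p)|^{2}$ over the dyadic interval $(x,cx]$ into a lower bound on the number of primes contributing.

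First I would localize the hypothesis by applying it at $cx$ and at $x$ and subtracting. Using the prime number theorem in the forms $\pi(cx)\sim cx/\log(cx)$ and $\pi(x)\sim x/\log x$, a short calculation gives
\[
\sum_{x<p\leq cx}|b(p)|^{2}\sim\kappa\bigl((\log cx)^{m}\pi(cx)-(\log x)^{m}\pi(x)\bigr)\gg x(\log x)^{m-1}.
\]
For $m\ge 1$ the main term arises from $(\log cx)^{m-1}cx-(\log x)^{m-1}x\sim(c-1)x(\log x)^{m-1}$, while for $m=0$ it reduces to the classical short-interval estimate $\pi(cx)-\pi(x)\sim(c-1)x/\log x$.

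Next I would split the sum according to whether $|b(p)|\le p^{-\eta}$ or $|b(p)|>p^{-\eta}$. The former contribution is trivially at most $x^{-2\eta}(\pi(cx)-\pi(x))\ll x^{1-2\eta}/\log x$, which is swallowed by the lower bound just obtained. Hence
\[
\sum_{\substack{x<p\leq cx\\ |b(p)|>p^{-\eta}}}|b(p)|^{2}\gg x(\log x)^{m-1}.
\]
Applying the pointwise bound $|b(p)|^{2}\ll p^{2\varepsilon'}\le(cx)^{2\varepsilon'}$ to the left-hand side, with $\varepsilon'$ to be chosen, yields
\[
N:=\#\bigl\{x<p\leq cx:\,|b(p)|>p^{-\eta}\bigr\}\gg\frac{x(\log x)^{m-1}}{x^{2\varepsilon'}}.
\]
Taking $\varepsilon'$ small enough in terms of $\varepsilon$ (any $\varepsilon'<\varepsilon/3$ will do) then produces $N\gg x^{1-\varepsilon}$.

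I do not foresee any genuine obstacle: the hypothesis is a second-moment estimate and the bound sought is a first-moment count, so the $|b(p)|\ll p^{\varepsilon'}$ assumption bridges the gap essentially for free. The only place requiring a moment of care is the verification of the short-interval asymptotic when $m=0$, where the difference $(\log cx)^{m}\pi(cx)-(\log x)^{m}\pi(x)$ must be handled directly via the prime number theorem rather than through the algebraic identity used for $m\ge 1$.
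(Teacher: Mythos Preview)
Your argument is correct and is essentially identical to the paper's proof: both split $\sum_{x<p\leq cx}|b(p)|^{2}$ according to whether $|b(p)|>p^{-\eta}$, bound the small part by $x^{1-2\eta}/\log x$, invoke the lower bound $\gg x(\log x)^{m-1}$ on the full short-interval sum, and then use $|b(p)|\ll p^{\varepsilon'}$ to extract the count. If anything, you spell out the short-interval lower bound more carefully than the paper, which records it in a single line.
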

\begin{proof}
One can easily get that
\[
\sum_{\substack{x<p\leq cx}}|b(p)|^2 \ll x^{\varepsilon}\sum_{\substack{x<p\leq cx\\|b(p)|> p^{-\eta}}}1+x^{-2\eta}\sum_{\substack{x<p\leq cx\\|b(p)|\leq p^{-\eta}}}1
\ll x^{\varepsilon}\sum_{\substack{x<p\leq cx\\|b(p)|> p^{-\eta}}}1 + \frac{x^{1-2\eta}}{\log x}.
\]
On the other hand, we have
\begin{align*}
\sum_{\substack{x<p\leq cx}}|b(p)|^2 \gg x(\log x)^{l-1}.
\end{align*}
Hence the proof is complete.
\end{proof}

\begin{lemma}\label{lem:Cassels}
Let $L(s) = \sum_{p}\sum_{k\geq 1} {b(p^k)}{p^{-ks}}$ for $\sigma>1$ be such that $b(p^k)\ll p^{k\theta}$ for some $\theta<1/2$, $b(p)\ll p^\varepsilon$ for every $\varepsilon>0$ and  
\begin{equation}\label{eq:PNT}
\lim_{x\to\infty}\frac{1}{(\log x)^m\pi (x)}\sum_{p\leq x} |b(p)|^2 = \kappa
\end{equation}
for some $\kappa>0$  and a non-negative integer $m$.
Then, for every complex $z$ and $\delta>0$ there exist $1<\sigma<1+\delta$ and a sequence $\chi(p)$ of complex number indexed by primes such that $|\chi(p)|=1$ and 
\[
\sum_p\sum_{k\geq 1}\frac{\chi(p)^kb(p^k)}{p^{k\sigma}} = z.
\]
\end{lemma}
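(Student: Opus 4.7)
My plan is to split the series into a $k=1$ main term $S_1(\sigma,\chi):=\sum_p \chi(p)b(p)/p^\sigma$ and a $k\geq 2$ tail $S_2(\sigma,\chi):=\sum_p\sum_{k\geq 2}\chi(p)^k b(p^k)/p^{k\sigma}$, bound the tail uniformly, choose $\sigma$ close to $1$ so that the range of $S_1$ covers an arbitrarily large disk, and then apply Lemma~\ref{lem:annulas} together with a topological perturbation argument to hit the prescribed $z$. The hypothesis $b(p^k)\ll p^{k\theta}$ with $\theta<1/2$ gives
\[
|S_2(\sigma,\chi)| \leq \sum_p\sum_{k\geq 2} p^{k(\theta-\sigma)} \ll \sum_p p^{2(\theta-1)} =: M,
\]
uniformly in $\sigma\geq 1$ and unimodular $\chi$.

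Next I would show $R_1(\sigma):=\sum_p|b(p)|/p^\sigma \to \infty$ as $\sigma \to 1^+$. Applying Lemma~\ref{lem:SN} in dyadic intervals $(2^j, 2^{j+1}]$ gives $\sum_{2^j<p\leq 2^{j+1}}|b(p)|/p^\sigma \gg 2^{j(1-\varepsilon-\eta-\sigma)}$ for any $\varepsilon,\eta>0$, and summing over $j\geq 0$ with $\varepsilon,\eta$ shrinking in tune with $\sigma-1$ yields the divergence. Given $z\in\mathbb{C}$ and $\delta>0$, I therefore fix $\sigma\in(1,1+\delta)$ with $R_1(\sigma)>|z|+4M+10$, and take $N$ so large that $R_{1,N}:=\sum_{p\leq N}|b(p)|/p^\sigma$ still exceeds $|z|+4M+10$ while no single summand exceeds $R_{1,N}/2$ (so that the ``hole'' $T_N$ in Lemma~\ref{lem:annulas} vanishes and its image becomes the full disk $\{|w|\leq R_{1,N}\}$).

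Setting $\chi(p)=1$ for $p>N$ gives a fixed contribution $A_N$ from those primes, reducing the problem to finding $(\chi(p))_{p\leq N}\in\mathbb{T}^{\pi(N)}$ such that
\[
\sum_{p\leq N}\chi(p)\frac{b(p)}{p^\sigma} + \sum_{p\leq N}\sum_{k\geq 2}\chi(p)^k\frac{b(p^k)}{p^{k\sigma}} = z - A_N.
\]
By Lemma~\ref{lem:annulas} the main-term map surjects onto $\{|w|\leq R_{1,N}\}$ via a continuous right-inverse $\phi$, and restricting to the image of $\phi$ turns the full map into $w\mapsto w + S_2(\phi(w))$, a continuous perturbation of the identity on the disk of size at most $M$. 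Since $|z-A_N|<R_{1,N}-M$, a standard topological degree argument then forces $z-A_N$ into the image of this perturbed map, yielding the required $\chi$. The main obstacle will be this last step --- namely, extracting a continuous right-inverse from Lemma~\ref{lem:annulas} and controlling the coupling between $S_1$ and $S_2$, which depend on the same $\chi$; this can be handled either directly from the constructive nature of the annulus lemma, or by an iterative correction scheme that uses successive dyadic blocks of primes, in each of which Lemma~\ref{lem:SN} ensures that the adjustment range dominates the tail error introduced.
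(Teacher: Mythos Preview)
Your setup is sound: the uniform bound $|S_2(\sigma,\chi)|\leq M$ for $\sigma\geq 1$, the divergence of $R_1(\sigma)$ as $\sigma\to 1^+$, and the use of Lemma~\ref{lem:annulas} to make the $k=1$ block hit a full disk are all correct and are exactly the ingredients the paper uses. The gap is precisely where you flag it, and it is a real one: Lemma~\ref{lem:annulas} is an existence statement and does \emph{not} furnish a continuous section $\phi:\{|w|\leq R_{1,N}\}\to\mathbb{T}^{\pi(N)}$. In fact such a global section is a nontrivial topological claim (the fibres change type as one moves from the boundary to the interior), and nothing ``constructive'' in the annulus lemma yields it. Without a continuous $\phi$, the map $w\mapsto w+S_2(\phi(w))$ is not defined, and the degree argument collapses. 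Nor can one bypass this by applying degree theory directly to $F:\mathbb{T}^{\pi(N)}\to\mathbb{C}$, since the domain has the wrong dimension.

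Your second suggested fix --- an iterative correction scheme on successive blocks of primes --- is the correct route, and it is exactly what the paper does, following Cassels. The paper partitions prime powers into ranges $(N_j,N_{j+1}]$ with $N_{j+1}=(1+c_0)N_j$, separates the primes with $|b(p)|>p^{-\varepsilon}$ (treated via Lemma~\ref{lem:SN}), and maintains inductively the inequality
\[
\Bigl|\text{(partial sum up to }N_j)-z+\text{(small-}|b(p)|\text{ tail)}\Bigr|\leq 10^{-2}\cdot\text{(remaining tail beyond }N_j).
\]
At each step the new $k=1$ terms from $(N_j,N_{j+1}]$ form a disk by Lemma~\ref{lem:annulas}, the new $k\geq 2$ terms have $\chi(p)$ already determined (since $p\leq\sqrt{N_{j+1}}<N_j$) and are dominated by a factor $N_j^{1/2-\theta-2\varepsilon}$; one then chooses the new $\chi(p)$'s to push the partial sum as far as possible toward the target. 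The point is that the iteration decouples the choice of $\chi$ from the higher-power terms it later influences, which is exactly the coupling problem your degree approach cannot handle. You should abandon the right-inverse idea and write out this Cassels iteration; your dyadic-block remark is essentially the same scheme.
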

\begin{proof}
We follow the idea introduced by Cassels in \cite{Cassels}.

Assume that $N_1$ is a positive integer, $\varepsilon>0$ and $c_0>0$; we determine these parameters later on. Put $M_j = [c_0N_j]$ and $N_{j+1} = N_j+M_j$. We shall show that there exist $\sigma\in(1,1+\delta)$ and  a sequence $\chi(p)$ with $|\chi(p)|=1$ such that
\begin{equation}\label{eq:main}
\left|\;\sideset{}{^*}\sum_{(p,k):\,p^k\leq N_{j}}\frac{\chi(p)^k b(p^k)}{p^{k\sigma}} - z +\sum_{(p,k):\, |b(p)|\leq p^{-\varepsilon}}\frac{b(p^k)}{p^{k\sigma}}\right|\leq 10^{-2}\;\ssum_{(p,k):\,p^k>N_{j}}\frac{|b(p^k)|}{p^{k\sigma}},
\end{equation}
where ${\sum}^*$ denotes the double sum over $(p,k)$ satisfying $|b(p)|>p^{-\varepsilon}$, $p$ is prime and $k\in\mathbb{N}$. Let us note that for every $\sigma\in (1,1+\delta)$ we have
\[
\sum_{(p,k):\, |b(p)|\leq p^{-\varepsilon}}\frac{|b(p^k)|}{p^{k\sigma}}\leq \sum_{p:\, |b(p)|\leq p^{-\varepsilon}}\frac{1}{p^{1+\varepsilon}}+\sum_p\sum_{k\geq 2}\frac{|b(p^k)|}{p^{k}}=:S_0<\infty.
\]

From \eqref{eq:PNT} and \cite[Theorem 3.6 (vi)]{Pe}, the abscissa of convergence of the series $\sum_p\sum_{k\geq 1}$ ${|b(p^k)|}{p^{-k\sigma}}$ is $1$, then by Landau's theorem, this series has a pole at $\sigma=1$, which implies that
\begin{equation*}
\ssum_{(p,k)}\frac{|b(p^k)|}{p^{k\sigma}}\to\infty\qquad\text{as \ $\sigma\to 1^+$}.
\end{equation*}

Therefore, we can find $\sigma\in (1,1+\delta)$ such that
\[
\ssum_{(p,k):\,p^k\leq N_1}\frac{|b(p^k)|}{p^{k\sigma}} + |z| + S_0
\leq 10^{-2}\;\ssum_{(p,k):\,p^k>N_{1}}\frac{|b(p^k)|}{p^{k\sigma}},
\]
and hence \eqref{eq:main} holds for $j=1$ and arbitrary $\chi(p)$'s with $p\leq N_1$.

Now, let us assume that complex numbers $\chi(p)$ are chosen for all $p\leq N_j$. We shall find $\chi(p)$ with $N_j<p\leq N_{j+1}$ and $|b(p)|>p^{-\varepsilon}$ such that \eqref{eq:main} holds with $j+1$ instead of $j$.

Let $\mathfrak{A}$ denote the set of pairs $(p,1)$ satisfying $p\in (N_j,N_{j+1}]$ is a prime number and $|b(p)|>p^{-\varepsilon}$. Moreover, define 
\[
\mathfrak{B}=\{(p,k): p^k\in (N_j,N_{j+1}],\ \text{$p$ is prime},\ k\geq 2,\ |b(p)|>p^{-\varepsilon}\}.\]

Note that the $\chi(p)^k$'s are already defined for $(p,k)\in\mathfrak{B}$, since for suitable $N_1$ and $c_0$ we have $p \leq \sqrt{N_{j+1}}<N_j$ if $(p,k)\in\mathfrak{B}$.

Using Lemma \ref{lem:SN} gives that
\[
|\mathfrak{A}|\gg N^{1-\varepsilon}_j
\]
and since $k\geq 2$ for every $(p,k)\in\mathfrak{B}$ we have
\[
|\mathfrak{B}|\ll N^{\frac{1}{2}}_j.
\]

Moreover, note that for every $p_1,p_2$ satisfying $(p_1,1),(p_2,1)\in\mathfrak{A}$, by the Ramanujan conjecture, we have
\[
\left|\frac{b(p_1)}{b(p_2)}\right|\ll N_j^{2\varepsilon}\qquad \text{and}\qquad\left(\frac{p_2}{p_1}\right)^\sigma\leq \left(\frac{N_{j+1}}{N_j}\right)^\sigma\leq (c_0+1)^{1+\delta},
\]
so
\[
\frac{|b(p_2)|}{p_2^\sigma}\gg N_j^{-2\varepsilon}\frac{|b(p_1)|}{p_1^\sigma}.
\]

Hence, using Lemma \ref{lem:annulas} with the sequence ${b(p)}{p^{-\sigma}}$, where $(p,1)\in\mathfrak{A}$, we obtain that
\[
\ssum_{(p,1)\in\mathfrak{A}}\frac{b(p)\chi(p)}{p^\sigma},\qquad |\chi(p)|=1,
\]
takes all values $z_0$ with $|z_0|\leq \ssum_{(p,1)\in\mathfrak{A}}{|b(p)|}{p^{-\sigma}}=:S_3$, since for sufficiently large $N_1$ and arbitrary $p_0$ satisfying $(p_0,1)\in\mathfrak{A}$, we have
\[
\ssum_{(p_0,1)\ne (p,1)\in\mathfrak{A}}\frac{|b(p)|}{p^\sigma}\gg N_j^{1-3\varepsilon}\frac{|b(p_0)|}{p_0^\sigma}>\frac{|b(p_0)|}{p_0^\sigma}.
\]
Hence the inner radius $T_{|\mathfrak{A}|}$ in Lemma \ref{lem:annulas} is $0$.

Write
\[
\Lambda:=\ssum_{(p,k):\,p^k\leq N_{j}}\frac{\chi(p)^k b(p^k)}{p^{k\sigma}} - z +\sum_{(p,k):\, |b(p)|\leq p^{-\varepsilon}}\frac{b(p^k)}{p^{k\sigma}}+\ssum_{(p,k)\in\mathfrak{B}}\frac{\chi(p)^k b(p^k)}{p^{k\sigma}}
\]
and put
\[
z_0 = \begin{cases}
-\Lambda&\text{ if $0<|\Lambda|\leq S_3$},\\
-S_3\Lambda/|\Lambda|&\text{ if $|\Lambda|> S_3$},\\
0&\text{ if $\Lambda=0$.}\\
\end{cases}
\]
Then, from Lemma \ref{lem:annulas} we can choose $\chi(p)$ for $(p,1)\in\mathfrak{A}$ such that
\begin{equation*}
\begin{split}
&\left|\;\ssum_{(p,k):\,p^k\leq N_{j}+M_j}\frac{\chi(p)^k b(p^k)}{p^{k\sigma}} - z +\sum_{(p,k):\, |b(p)|\leq p^{-\varepsilon}}\frac{b(p^k)}{p^{k\sigma}}\right|\\
&\qquad \qquad  =\left|\Lambda + \ssum_{(p,1)\in\mathfrak{A}}\frac{b(p)\chi(p)}{p^\sigma}\right|
\leq \max(0,S_1+S_2-S_3),
\end{split}
\end{equation*}
where
\[
S_1:=\left|\;\ssum_{(p,k):\,p^k\leq N_{j}}\frac{\chi(p)^k b(p^k)}{p^{k\sigma}} - z +\sum_{(p,k):\,|b(p)|\leq p^{-\varepsilon}}\frac{b(p^k)}{p^{k\sigma}}\right|
\]
and
\[
S_2:=\ssum_{(p,k)\in\mathfrak{B}}\frac{|b(p^k)|}{p^{k\sigma}},
\]
so $|\Lambda|\leq S_1+S_2$.

Now, let us notice that
\[
\frac{S_3}{S_2}\geq \frac{N_j^{\sigma-\theta}}{N_{j+1}^{\sigma+\varepsilon}}\frac{|\mathfrak{A}|}{|\mathfrak{B}|}\gg N_j^{1/2-\theta-2\varepsilon}\geq \frac{101}{99}
\]
for sufficiently small $\varepsilon>0$ and sufficiently large $N_1$. Hence
\[
S_2-S_3\leq -10^{-2}(S_2+S_3).
\]

Moreover, from \eqref{eq:main} we have
\[
S_1\leq 10^{-2}(S_2+S_3+S_4),
\]
where
\[
S_4:=\ssum_{(p,k):\,p^k>N_{j+1}}\frac{|b(p^k)|}{p^{k\sigma}}.
\]
Thus $S_1+S_2-S_3<10^{-2}S_4$ and, by induction, \eqref{eq:main} holds for all $j\in\mathbb{N}$. So letting $N_j\to\infty$ completes the proof.
\end{proof}

The classical Kronecker approximation theorem (see for example \cite[Lemma 1.8]{Steu}) plays a crucial role in the proof of the following lemma.
\begin{lemma}\label{lem:4}
Assume that $L(s)$ satisfies the hypothesis of Lemma \ref{lem:Cassels}.
Then, for every $z$ and $\delta>0$, the set of real $\tau$ satisfying
\[
L(s+i\tau)=z\qquad\text{for some $1<\Re(s)<1+\delta$,}
\]
has a positive lower density. 
\end{lemma}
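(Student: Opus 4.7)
The idea is to combine Lemma~\ref{lem:Cassels} with Kronecker's approximation theorem and Rouché's theorem. First I would apply Lemma~\ref{lem:Cassels} to produce $\sigma_0 \in (1, 1+\delta)$ and a unimodular sequence $\chi(p)$ satisfying $g_\chi(\sigma_0) = z$, where
\[
g_\chi(s) := \sum_p \sum_{k \geq 1} \frac{\chi(p)^k b(p^k)}{p^{ks}}.
\]
Since $b(p^k) \ll p^{k\theta}$ with $\theta < 1/2$ and $b(p) \ll p^{\varepsilon}$ for every $\varepsilon > 0$, this series converges absolutely on $\{\Re(s) > 1\}$, so $g_\chi$ is holomorphic there.

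Next I would check that $g_\chi \not\equiv z$: otherwise letting $\Re(s) \to +\infty$ in $g_\chi(s) \equiv z$ would force $z = 0$, after which the uniqueness of the Dirichlet series expansion of $g_\chi \equiv 0$ would force all $b(p^k)$ to vanish, contradicting hypothesis~\eqref{eq:PNT}. Hence $\sigma_0$ is an isolated zero of $g_\chi - z$. I would fix $r \in (0, \min(\sigma_0 - 1, 1 + \delta - \sigma_0))$ small enough so that $g_\chi - z$ does not vanish on $\gamma := \partial D(\sigma_0, r)$, and set $m := \min_{s \in \gamma} |g_\chi(s) - z| > 0$.

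The hard step is to produce a set of $\tau \in \mathbb{R}$ of positive lower density for which
\[
|L(s + i\tau) - g_\chi(s)| < m \qquad \text{holds uniformly for } s \in \gamma.
\]
Writing $\chi(p) = e^{2\pi i \beta_p}$ with $\beta_p \in [0,1)$ and invoking the $\mathbb{Q}$-linear independence of $\{\log p\}_p$, Kronecker's approximation theorem provides, for any $y > 0$ and $\varepsilon > 0$, a set of $\tau$ of positive lower density satisfying $\|\tau(\log p)/(2\pi) + \beta_p\| < \varepsilon$ for every prime $p \leq y$; for such $\tau$ one has $|p^{-ik\tau} - \chi(p)^k| \leq 2\pi k\varepsilon$ when $p \leq y$, and $|p^{-ik\tau} - \chi(p)^k| \leq 2$ always. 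Splitting
\[
L(s + i\tau) - g_\chi(s) = \sum_{p,k} \frac{b(p^k)\bigl(p^{-ik\tau} - \chi(p)^k\bigr)}{p^{ks}}
\]
at $p = y$, on $\gamma$ the tail is bounded by $2\sum_{p > y}\sum_k |b(p^k)| p^{-k(\sigma_0 - r)}$, which converges (since $\sigma_0 - r > 1$) and can be made $< m/2$ by taking $y$ large, while the head is bounded by $2\pi\varepsilon \sum_{p \leq y}\sum_k k |b(p^k)| p^{-k(\sigma_0 - r)}$ and can be made $< m/2$ for $\varepsilon$ sufficiently small.

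Finally, Rouché's theorem applied on $\gamma$ to $g_\chi - z$ and $L(\,\cdot\, + i\tau) - z = (g_\chi - z) + (L(\,\cdot\, + i\tau) - g_\chi)$ produces a zero of $L(s + i\tau) - z$ inside $D(\sigma_0, r) \subset \{1 < \Re(s) < 1 + \delta\}$ for every $\tau$ in the positive-density set just constructed, completing the proof. The main obstacle is obtaining the bound $|L(s+i\tau) - g_\chi(s)| < m$ \emph{uniformly} on the full circle $\gamma$ rather than pointwise at $\sigma_0$; this is what forces both the truncation at $p = y$ and the requirement $\sigma_0 - r > 1$, so that the tail and head sums converge uniformly in $s \in \gamma$.
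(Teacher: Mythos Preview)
Your proof is correct and follows essentially the same route as the paper: invoke Lemma~\ref{lem:Cassels} to get $\sigma_0$ and $\chi$, pick a small circle about $\sigma_0$ on which $g_\chi-z$ is nonzero, use Kronecker's theorem to force $p^{-i\tau}\approx\chi(p)$ for small primes on a set of $\tau$ of positive lower density, and conclude via Rouch\'e. You are in fact slightly more careful than the paper in two spots---you explicitly exclude the degenerate case $g_\chi\equiv z$, and you require $r<1+\delta-\sigma_0$ so the zero stays in the strip---while your truncation only in $p$ (rather than in both $p$ and $k$) is a harmless variant.
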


\begin{proof}
By Lemma \ref{lem:Cassels}, we choose $\sigma\in (1,1+\delta)$ and a sequence $\chi(p)$ with $|\chi(p)|=1$ such that
\[
\sum_p\sum_{k\geq 1}\frac{\chi(p)^kb(p^k)}{p^{k\sigma}} = z.
\]
Next, since $F(s)=\sum_p\sum_{k\geq 1}{\chi(p)^kb(p^k)}{p^{-ks}}$ is analytic in the half-plane $\Re(s)>1$, we can find $r$ with $0<r<\sigma-1$ such that $F(s)-z\ne 0$ if $|s-\sigma|=r$. Then we put $\varepsilon:=\min_{|s-z|=r}|F(s)-z|$.

Since the series $\sum_{p} \sum_{k=1}^\infty |b(p^k)| p^{-k(\sigma-r)}$ converges absolutely, we can take a positive integer $M$ such that
\begin{equation*}\label{eq:M0}
\sum_{p\le M} \sum_{k>M}^\infty \frac{|b(p^k)|}{p^{k(\sigma-r)}} + 
\sum_{p>M} \sum_{k=1}^\infty \frac{|b(p^k)|}{p^{k(\sigma-r)}} < \frac{\varepsilon}{4}.
\end{equation*}

Moreover, if we assume that
\begin{equation}\label{eq:dioph}
\max_{p\leq M}\left|p^{-i\tau} - \chi(p)\right|<\varepsilon_1
\end{equation}
for $\varepsilon_1>0$, then
\begin{equation*}
\begin{split}
\bigl|p^{-ik\tau} - \chi(p)^k\bigr| &= |p^{-i\tau} - \chi(p)| 
\bigl|p^{-i(k-1)\tau} + p^{-i(k-2)\tau} \chi(p) + \cdots + p^{-i\tau} \chi(p)^{k-2} + \chi(p)^{k-1}\bigr| \\
& < k \varepsilon_1 \le  M \varepsilon_1, \qquad 1 \le k \le M.
\end{split}
\end{equation*}

Therefore, for sufficiently small $\varepsilon_1$ and $s$ satisfying $|s-\sigma|= r$, we obtain
$$
\Biggl| \sum_{p \le M} \sum_{k=1}^M \frac{b(p^k)}{p^{k(s+i\tau)}} -
\sum_{p \le M} \sum_{k=1}^M \frac{b(p^k) \chi(p)^k}{p^{ks}}  \Biggr| <
M \varepsilon_1 \sum_{p \le M} \sum_{k=1}^M \frac{|b(p^k)|}{p^{k(\sigma-r)}}<\frac{\varepsilon}{2},
$$
and
\[
\left|L(s+i\tau)-z-(F(s)-z)\right| = \left|L(s+i\tau)-F(s)\right| <\varepsilon \leq |F(s)-z|,
\]
provided \eqref{eq:dioph} holds. 

Thus, by Rouch\'e's theorem (see for example \cite[Theorem 8.1]{Steu}), for every $\tau$ satisfying \eqref{eq:dioph} there is a complex number $s$ with $|s-\sigma|\leq r$ such that  $L(s+i\tau) = z$. But, by the classical Kronecker approximation theorem,  the set of $\tau$ satisfying \eqref{eq:dioph} has a positive density, so the number of solutions of the equation $L(s+i\tau)=z$ with $1<\Re(s)<1+\delta$ and $\tau\in[0,T]$ is $\gg T$ for sufficiently large $T>0$.
\end{proof}

Now we are in a position to prove Theorem \ref{th:dn1}. 
\begin{proof}[Proof of Theorem \ref{th:dn1}]
Obviously, the case $m=0$ follows immediately from Lemma \ref{lem:4}, since $a(p)=b(p)$ for every prime $p$. Thus it suffices to show that for every $m\geq 1$ the function $(\log \Lf(s))^{(m)}$ satisfies the assumption of Lemma \ref{lem:4}.

Note that
$$
(-1)^m\bigl( \log {\mathcal{L}} (s)\bigr)^{(m)}  =
 \sum_{p} \sum_{k=1}^\infty \frac{b(p^k) (k\log p)^m}{p^{ks}} , \qquad \sigma >1,
$$
$b(p)(\log p)^m= a(p)(\log p)^m\ll p^{\varepsilon}$ for every $\varepsilon>0$, and $b(p^k) (k\log p)^m \ll p^{k\theta_1}$ for some $\theta_1$ with $\theta<\theta_1<1/2$ by the assumption $b(p^k) \ll p^{k\theta}$ for some $\theta <1/2$. Moreover, by partial summation and \eqref{eq:Selberg}, we get
\begin{align*}
\sum_{p\leq x}|b(p)|^2(\log p)^{2m} = \sum_{p\leq x}|a(p)|^2(\log p)^{2m}=\kappa(\log x)^{2m}\pi(x)(1+o(1)),
\end{align*}
which completes the proof.
\end{proof}

\begin{remark}\label{rem:convex}
In Bohr's proof of Corollary \ref{pro:1} for ${\mathcal{L}} (s) = \zeta (s)$, the convexity of 
$$
- \log \bigl( 1-p^{-s}\bigr) = \sum_{k=1}^\infty \frac{1}{kp^{ks}}
$$
plays a crucial role (see also \cite[Theorem 1.3]{Steu} and \cite[Theorem 11.6 (B)]{Tit}). However, we prove Corollary \ref{pro:1} without using the convexity since the closed curve described by $\sum_{k=1}^\infty b(p^k) p^{-ks}$ is not always convex when $t$ runs through the whole ${\mathbb{R}}$ (see also \cite{Map}).
\end{remark}

\begin{proof}[Proof of Corollary \ref{th:1}]
Put $L(s)=\log L_1(s) - \log L_2(s)$. Then $L(s) = \sum_p\sum_{k\geq 1}(b_1(p^k)-b_2(p^k))p^{-ks}$, where the $b_j(p^k)$'s denote the coefficients in the Dirichlet series expansion of $\log L_j(s)$. Obviously $b_1(p^k)-b_2(p^k)\ll p^{k\theta}$ for some $\theta<1/2$, and $b_1(p)-b_2(p)\ll p^\varepsilon$ for every $\varepsilon>0$. Thus, by \eqref{eq:SelbTwo}, we can apply Lemma \ref{lem:4} to obtain that the set of real $\tau$ satisfying 
\[
L(s+i\tau) = \log (L_1(s+i\tau)/L_2(s+i\tau))= \log(-c_2/c_1)
\]
has a positive lower density. Therefore, the proof is complete.
\end{proof}
\begin{remark}\label{rem:1}
Note that
\begin{equation}\label{eq:DiffTwo}
\sum_{p\leq x} |a_1 (p) - a_2 (p)|^2 = \sum_{p\leq x} |a_1 (p)|^2 + \sum_{p\leq x} |a_2 (p)|^2 -2\Re\sum_{p\leq x} a_1 (p) \overline{a_2 (p)}.
\end{equation}
Therefore, if the abscissa of absolute convergence for both $L$-functions $\Lf_1$ and $\Lf_2$ is $1$, then the assumption \eqref{eq:SelbTwo} in Corollary \ref{th:1} can be replaced by Selberg's orthonormality conjecture in the following stronger form
\[
\forall_{j=1,2}\lim_{x\to\infty}\frac{1}{\pi(x)}\sum_{p\leq x} |a_j(p)|^2=\kappa_j,\qquad \lim_{x\to\infty}\frac{1}{\pi(x)}\sum_{p\leq x} a_1(p)\overline{a_2(p)} = 0,
\]
for some $\kappa_1,\kappa_2>0$.

On the other hand, if the abscissa of absolute convergence of one of them, say $\Lf_2$, is less than $1$, then we get
\[
\sum_{p\leq x}|a_2(p)|^2\leq \sqrt{\sum_{p\leq x}\frac{|a_2(p)|}{p^{\sigma_0}}}\sqrt{\sum_{p\leq x}|a_2(p)|^3p^{\sigma_0}} \ll x^{1/2+\sigma_0/2+\varepsilon}
\]
for some $\sigma_0<1$ and every $\varepsilon>0$.
Moreover, by Cauchy-Schwarz inequality, we have
\[
\Re\sum_{p\leq x} a_1 (p) \overline{a_2 (p)}\leq \sqrt{\sum_{p\leq x}|a_1(p)|^2}\sqrt{\sum_{p\leq x}|a_2(p)|^2}\ll x^{3/4+\sigma_0/4+\varepsilon}
\]
for every $\varepsilon>0$.

Therefore, by \eqref{eq:DiffTwo}, we obtain
\begin{align*}
\sum_{p\leq x} |a_1 (p) - a_2 (p)|^2 = \sum_{p\leq x}|a_1(p)|^2 + O(x^{3/4+\sigma_0/4+\varepsilon}),
\end{align*}
and assuming \eqref{eq:Selberg} for $\Lf_1$ implies Corollary \ref{th:1}.
\end{remark}

\section{Almost periodicity and Corollary \ref{th:1}}
We follow the notion of almost periodicity in \cite[Section 9.5]{Steu}. In 1922, Bohr \cite{Bohr} proved that every Dirichlet series $f(s)$, having a finite abscissa of absolute convergence $\sigma_a$ is almost periodic in the half-plane $\sigma > \sigma_a$. Namely, for any given $\delta >0$ and $\varepsilon >0$, there exists a length $l:= l(f,\delta,\varepsilon)$ such that every interval of length $l$ contains a number $\tau$ for which
$$
\bigl| f(\sigma + {\rm{i}}t + {\rm{i}}\tau) - f(\sigma + {\rm{i}}t) \bigr| < \varepsilon
$$
holds for any $\sigma \ge \sigma_a+\delta$ and for all $t \in {\mathbb{R}}$. From the Dirichlet series expression, the $L$-function ${\mathcal{L}} (s) \in {\mathcal{S}}_{\!A}$ is almost periodic when $\sigma >1$. By using Corollary \ref{th:1}, we have the following corollary as a kind of analogue of almost periodicity.
\begin{corollary}\label{cor:alzero}
Let ${\mathcal{L}} (s) := \sum_{n=1}^\infty a(n) n^{-s} \in {\mathcal{S}}_{\!A}$ satisfy \eqref{eq:Selberg}. Suppose $c_1, c_2 \in {\mathbb{C}} \setminus \{ 0 \}$ and $\Re(\eta) >0$. Then one has 
$$
\# \bigl\{s: \Re (s) >1, \,\,\, \Im(s) \in [0,T] \,\,\, \mbox{and} \,\,\,
c_1 {\mathcal{L}} (s) + c_2 {\mathcal{L}} (s+\eta) =0 \bigr\} \gg T
$$
for sufficiently large $T$. 
\end{corollary}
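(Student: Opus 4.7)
The plan is to reduce Corollary \ref{cor:alzero} to Corollary \ref{th:1} by setting $\mathcal{L}_1(s) := \mathcal{L}(s)$ and $\mathcal{L}_2(s) := \mathcal{L}(s+\eta)$, so that $c_1\mathcal{L}_1(s)+c_2\mathcal{L}_2(s)$ is precisely the function whose zeros we want to count. There are two things to check: that $\mathcal{L}_2\in\SA$, and that the hypothesis \eqref{eq:SelbTwo} is satisfied.

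First I would verify that $\mathcal{L}_2\in\SA$. The Euler product \eqref{eq:0} for $\mathcal{L}$ immediately yields
\[
\mathcal{L}(s+\eta) = \prod_p\exp\Biggl(\sum_{k=1}^\infty\frac{b(p^k)p^{-k\eta}}{p^{ks}}\Biggr),
\]
so the shifted Euler coefficients are $b_2(p^k)=b(p^k)p^{-k\eta}$ and satisfy $|b_2(p^k)|\ll p^{k(\theta-\Re(\eta))}$ with $\theta-\Re(\eta)<\theta<1/2$ since $\Re(\eta)>0$. Similarly the shifted Dirichlet coefficients $a(n)n^{-\eta}$ remain $O(n^\varepsilon)$, and the series converges absolutely for $\sigma>1$ because $\sigma+\Re(\eta)>1$.

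Next I would verify \eqref{eq:SelbTwo} for the pair $(\mathcal{L}_1,\mathcal{L}_2)$. Since $a_1(p)=a(p)$ and $a_2(p)=a(p)p^{-\eta}$, we have
\[
|a_1(p)-a_2(p)|^2=|a(p)|^2|1-p^{-\eta}|^2=|a(p)|^2-2\Re\bigl(|a(p)|^2p^{-\eta}\bigr)+|a(p)|^2p^{-2\Re(\eta)}.
\]
The first piece contributes $\kappa\pi(x)(1+o(1))$ by \eqref{eq:Selberg}. For the remaining two pieces, I would apply Abel summation with $A(x):=\sum_{p\leq x}|a(p)|^2\sim\kappa\pi(x)$ to show that
\[
\sum_{p\leq x}|a(p)|^2p^{-\alpha}=o(\pi(x))\qquad\text{for every }\alpha>0;
\]
an elementary case analysis (according to whether $\alpha<1$, $\alpha=1$, or $\alpha>1$) shows this is $O(x^{1-\alpha}/\log x)$, $O(\log\log x)$ or $O(1)$ respectively, all of which are $o(x/\log x)$. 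Applied with $\alpha=\eta$ (viewed as complex exponent, where the same bound holds after taking real parts) and $\alpha=2\Re(\eta)$, this shows that \eqref{eq:SelbTwo} holds with the \emph{same} constant $\kappa>0$.

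Corollary \ref{th:1} applied with any $\delta>0$ then produces $\gg T$ solutions of $c_1\mathcal{L}(s)+c_2\mathcal{L}(s+\eta)=0$ in the strip $1<\Re(s)<1+\delta$, $\Im(s)\in[0,T]$, which in particular lie in the half-plane $\Re(s)>1$. No serious obstacle arises: the only places where the hypothesis $\Re(\eta)>0$ is actually used are in ensuring $\theta-\Re(\eta)<1/2$ (so that $\mathcal{L}(s+\eta)\in\SA$) and in the partial summation argument that shows the cross terms are $o(\pi(x))$; both are routine.
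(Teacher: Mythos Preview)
Your proposal is correct and follows essentially the same route as the paper: set $\mathcal{L}_1(s)=\mathcal{L}(s)$, $\mathcal{L}_2(s)=\mathcal{L}(s+\eta)$, verify the hypothesis \eqref{eq:SelbTwo}, and invoke Corollary~\ref{th:1}. The only difference is in how \eqref{eq:SelbTwo} is checked: the paper appeals to Remark~\ref{rem:1}, which handles in one stroke any $\mathcal{L}_2\in\mathcal{S}_{\!A}$ whose abscissa of absolute convergence is strictly less than~$1$ (via Cauchy--Schwarz), whereas you exploit the explicit relation $a_2(p)=a(p)p^{-\eta}$ and Abel summation; both arguments yield $\sum_{p\le x}|a_1(p)-a_2(p)|^2=\kappa\,\pi(x)(1+o(1))$.
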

\begin{proof}
The corollary follows from Remark \ref{rem:1}, since the abscissa of absolute convergence of $\Lf(s+\eta)$ is smaller than $1$.
\end{proof}

On the contrary, we have the following proposition when $\Re(\eta) =0$.
\begin{proposition}\label{pro:nozero1}
Let ${\mathcal{L}} (s) \in {\mathcal{S}}_{\!A}$. Then for any $\delta >0$, there exists $\theta \in {\mathbb{R}} \setminus \{0 \}$ such that the function 
$$
{\mathcal{L}} (s) + {\mathcal{L}} (s+i\theta)
$$ 
does not vanish in the region $\sigma \ge 1+\delta$. 
\end{proposition}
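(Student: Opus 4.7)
My plan is to combine a uniform lower bound for $|\mathcal{L}(s)|$ on $\Re(s)\geq 1+\delta$ with Bohr's almost periodicity recalled just before the proposition; a triangle-inequality perturbation will then force $\mathcal{L}(s)+\mathcal{L}(s+i\theta)$ to stay away from $0$ for a suitably chosen nonzero real $\theta$.

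First I would show that $|\mathcal{L}(s)|\geq m_\delta$ for some constant $m_\delta>0$ uniformly on $\Re(s)\geq 1+\delta$. Using the Euler product in \eqref{eq:0}, it is enough to check that the series $\sum_p\sum_{k\geq 1}b(p^k)/p^{ks}$ is uniformly bounded there. The tail with $k\geq 2$ is controlled using $b(p^k)\ll p^{k\theta}$ with $\theta<1/2$, since $\sum_p\sum_{k\geq 2}p^{-k(\sigma-\theta)}$ converges on $\sigma\geq 1+\delta$; the $k=1$ contribution is controlled by $a(p)=b(p)\ll p^\varepsilon$ for any $\varepsilon>0$, picking $\varepsilon<\delta$. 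Hence $|\log \mathcal{L}(s)|\leq M_\delta$ uniformly on $\Re(s)\geq 1+\delta$, giving $|\mathcal{L}(s)|\geq e^{-M_\delta}=:m_\delta>0$.

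Next, since $\mathcal{L}(s)$ has abscissa of absolute convergence equal to $1$, Bohr's almost periodicity theorem (in the form recalled at the start of Section~3) applied with tolerance $\varepsilon:=m_\delta$ and strip parameter $\delta$ yields a length $l>0$ such that every real interval of length $l$ contains a $\tau$ with
\[
|\mathcal{L}(s+i\tau)-\mathcal{L}(s)|<m_\delta\qquad\text{for all $s$ with $\Re(s)\geq 1+\delta$.}
\]
Choosing such a $\tau$ in, say, $[1,1+l]$, I set $\theta:=\tau\in\mathbb{R}\setminus\{0\}$. The triangle inequality then gives, uniformly on $\Re(s)\geq 1+\delta$,
\[
|\mathcal{L}(s)+\mathcal{L}(s+i\theta)|\geq 2|\mathcal{L}(s)|-|\mathcal{L}(s+i\theta)-\mathcal{L}(s)|>2m_\delta-m_\delta=m_\delta>0,
\]
which is the claimed non-vanishing.

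There is no genuine obstacle in this approach; the only step that deserves care is the uniform lower bound on $|\mathcal{L}(s)|$, and it is precisely there that the conditions $b(p^k)\ll p^{k\theta}$ with $\theta<1/2$ and $a(n)\ll n^\varepsilon$ built into the definition of $\mathcal{S}_{\!A}$ are used in an essential way. Everything else reduces to a direct combination of Bohr's theorem with the triangle inequality.
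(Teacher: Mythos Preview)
Your proposal is correct and follows essentially the same approach as the paper: a uniform lower bound for $|\mathcal{L}(s)|$ on $\sigma\ge 1+\delta$ obtained from the Euler product, combined with Bohr's almost periodicity and the triangle inequality. One tiny quibble: for $\mathcal{L}\in\mathcal{S}_{\!A}$ the abscissa of absolute convergence is only $\le 1$, not necessarily equal to $1$, but this is harmless since Bohr's theorem applies on $\sigma\ge 1+\delta$ in either case.
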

\begin{proof}
For any $\varepsilon >0$, we can find $\theta \in {\mathbb{R}} \setminus \{0 \}$ which satisfies
$$
|{\mathcal{L}} (s) - {\mathcal{L}} (s+i\theta)| < \varepsilon, \qquad \Re (s) \ge 1+\delta
$$ 
from almost periodicity of ${\mathcal{L}} (s) \in {\mathcal{S}}_{\!A}$. Hence we have
\begin{equation*}
\begin{split}
\bigl|{\mathcal{L}} (s) + {\mathcal{L}} (s+i\theta) \bigr| &= 
\bigl|2{\mathcal{L}} (s) + {\mathcal{L}} (s+i\theta) - {\mathcal{L}} (s) \bigr| \ge
\bigl|2{\mathcal{L}} (s) \bigr| - \bigl|{\mathcal{L}} (s) - {\mathcal{L}} (s+i\theta) \bigr| \\ &>
2 \prod_p \exp \Biggl( - \sum_{k=1}^\infty \frac{|b(p^k)|}{p^{k(1+\delta)}} \Biggr) - \varepsilon ,
\qquad \Re (s) \ge 1+\delta.
\end{split}
\end{equation*}
From the assumption for ${\mathcal{L}} (s) \in {\mathcal{S}}_{\!A}$, the sum $\sum_p \sum_{k=1}^\infty |b(p^k)| p^{-k(1+\delta)}$ converges absolutely when $\delta >0$. Hence, by taking a suitable $\varepsilon >0$, we have
$$
|{\mathcal{L}} (s) + {\mathcal{L}} (s+i\theta)| > 0, \qquad \Re (s) \ge 1+\delta,
$$
which implies Proposition \ref{pro:nozero1}. 
\end{proof}

\begin{remark}
Proposition \ref{pro:nozero1} should be compared with the following fact. Let $\theta \in {\mathbb{R}} \setminus \{0 \}$ and $c_1, c_2 \in {\mathbb{C}} \setminus \{ 0 \}$. Then the function 
$$
c_1 \zeta (s) + c_2 \zeta (s+i\theta)
$$
vanishes in the strip $1/2 < \sigma <1$. This is an easy consequence of \cite[Theorem 10.7]{Steu}. 

Hence, for any $\delta >0$, there exists $\theta \in {\mathbb{R}} \setminus \{0 \}$ such that the function 
$$
\zeta (s) + \zeta (s+i\theta)
$$ 
does not vanish in the half-plane $\sigma \ge 1+\delta$, but has infinitely many zeros in the vertical strip $1/2 < \sigma <1$. 
\end{remark}

\subsection*{Acknowledgments}
The first author was partially supported by JSPS grant 24740029. 

The second author was partially supported by (JSPS) KAKENHI grant no. 26004317 and the grant no. 2013/11/B/ST1/02799 from the National Science Centre.

The authors would like to thank the referee for useful comments and suggestions that helped them to improve the original manuscript. 


\end{document}